\documentclass[10pt, a4paper, reqno]{amsart}

\usepackage{amscd, dsfont, amsmath, amstext, amsthm, amssymb, amsopn, amsxtra, amsfonts, setspace, fancyhdr, paralist}

\usepackage[mathcal]{euscript}
\usepackage[english]{babel}

\usepackage{tikz-cd}
\usepackage{boxedminipage}

\usepackage{pgfplots}
\pgfplotsset{mystyle/.style={color=red,no marks,line width=0.75pt}} 

\usepackage{todonotes}

\makeatletter
\renewcommand\@makefntext[1]{\hskip-0.7em\@makefnmark#1}
\makeatother

\newcommand{\hatt}{\scriptscriptstyle\wedge}

\newcommand{\ind}[1]{\operatorname{ind}_{#1}\nolimits}


\newcommand{\proj}[1]{\mathop{\operatorname{proj}}_{#1}\nolimits}

\newcommand{\T}{(T(t))_{t\geqslant0}}

\newcommand{\Cnull}{$\text{C}_0$}

\newcommand{\id}{\operatorname{id}\nolimits}

\oddsidemargin27mm
\evensidemargin27mm
\setlength{\textwidth}{155mm}
\setlength{\textheight}{245mm}
\setlength{\hoffset}{-25mm}
\setlength{\voffset}{-15mm}
\setlength{\footskip}{8mm}
\setlength{\parindent}{0mm}
\setlength{\parskip}{0.5ex}
\setlength{\headheight}{0mm}

\makeatletter
\g@addto@macro\normalsize{%
  \setlength\abovedisplayskip{8pt}
  \setlength\belowdisplayskip{8pt}
  \setlength\abovedisplayshortskip{5pt}
  \setlength\belowdisplayshortskip{8pt}
}

\allowdisplaybreaks

\newtheoremstyle{normal}
{5pt}
{5pt}
{\normalfont}
{}
{\bfseries}
{}
{0.4em}
{\bfseries{\thmname{#1}\thmnumber{ #2}.\thmnote{ \hspace{0.5em}(#3)\newline}}}

\newtheoremstyle{kursiv}
{5pt}
{5pt}
{\itshape}
{}
{\bfseries}
{}
{0.4em}
{\bfseries{\thmname{#1}\thmnumber{ #2}.\thmnote{ \hspace{0.5em}(#3)\newline}}}

\theoremstyle{kursiv}

\newtheorem{thm}{Theorem}

\newtheorem{cor}[thm]{Corollary}
\newtheorem{lem}[thm]{Lemma}

\theoremstyle{normal}
\newtheorem{dfn}[thm]{Definition}

\newtheorem{rem}[thm]{Remark}

\usepackage{soul}
\setlength{\parskip}{\baselineskip}
\fancyhead{}
\fancyhead[RE]{}
\fancyhead[LO]{}
\fancyhead[RO,LE]{}
\fancyfoot[C]{$\scriptstyle\thepage$}

\pagestyle{fancy}
\setlength{\parskip}{0cm}

\newcommand{\red}[1]{{\leavevmode\color{black}#1}}

\setlength{\marginparwidth}{2cm}

\renewcommand{\epsilon}{\varepsilon}
\renewcommand{\phi}{\varphi}

\DeclareFontEncoding{FML}{}{}
\DeclareFontSubstitution{FML}{futm}{m}{it}

\DeclareSymbolFont{gletters}{FML}{futm}{m}{it}

\DeclareMathSymbol{\PHI}{\mathord}{gletters}   {30}
\renewcommand{\Phi}{\PHI}

\DeclareMathSymbol{\PSI}{\mathord}{gletters}  {32}
\renewcommand{\Psi}{\PSI}

\begin{document}
$ $
\vspace{-35pt}

\title{Pivot duality of universal interpolation\\and extrapolation spaces}

\author{Christian Bargetz\hspace{0.5pt}\MakeLowercase{$^{\text{1}}$} and Sven-Ake Wegner\hspace{0.5pt}\MakeLowercase{$^{\text{2}}$}}

\renewcommand{\thefootnote}{}
\hspace{-1000pt}\footnote{\hspace{5.5pt}2010 \emph{Mathematics Subject Classification}: Primary 47A06, Secondary 47D03, 46A13.}
\hspace{-1000pt}\footnote{\hspace{5.5pt}\emph{Key words and phrases}: Dual with respect to a pivot space, extrapolation space, interpolation space.\vspace{1.6pt}}

\hspace{-1000pt}\footnote{\hspace{0pt}$^{1}$\,University of Innsbruck, Department of Mathematics, Technikerstra{\ss}e~13, 6020 Innsbruck, Austria (current address) and The Technion---Israel Institute of Technology, Department of Mathematics, Haifa 320000, Israel,\\ E-Mail: christian.bargetz@uibk.ac.at.\vspace{1.6pt}}

\hspace{-1000pt}\footnote{\hspace{0pt}$^{2}$\,Corresponding author: University of Wuppertal, School of Mathematics and Natural Sciences, Gau{\ss}stra{\ss}e 20, 42119 Wuppertal, Germany (current address) and Nazarbayev University, School of Science and Technology, 53 Kabanbay Batyr Ave, 010000 Astana, Kazakhstan, E-Mail: wegner@math.uni-wuppertal.de, Phone: +49\hspace{1.2pt}(0)\hspace{1.2pt}202\hspace{1.2pt}/\hspace{1.2pt}439\hspace{1.2pt}-\hspace{1.2pt}2590.}

\begin{abstract} It is a widely used method, for instance in perturbation theory, to associate with a given \Cnull-semigroup its so-called interpolation and extrapolation spaces. In the model case of the shift semigroup acting on $L^2(\mathbb{R})$, the resulting chain of spaces recovers the classical Sobolev scale. In 2014, the second named author defined the universal interpolation space as the projective limit of the interpolation spaces and the universal extrapolation space as the completion of the inductive limit of the extrapolation spaces, provided that the latter is Hausdorff.

\smallskip

\noindent{}In this note we use the notion of the dual with respect to a pivot space in order to show that the aforementioned inductive limit is Hausdorff, already complete, and can be represented as the dual of the projective limit whenever a power of the generator of the initial semigroup is a self-adjoint operator. In the case of the classical Sobolev scale we show that the duality holds, and that the two universal spaces were already studied by Laurent Schwartz in the 1950s.

\smallskip
  
\noindent{}Our results and examples complement the approach of Haase, who in 2006 gave a different definition of universal extrapolation spaces in the context of functional calculi. Haase avoids the inductive limit topology precisely for the reason that it a priori cannot be guaranteed that the latter is always Hausdorff. We show that this is indeed the case provided that we start with a semigroup defined on a reflexive Banach space.
\end{abstract}

\maketitle
\vspace{-15pt}
\section{The classical Sobolev scale}\label{SEC-1}
\smallskip

We start by considering the following generic example. 
Let $\T$ denote the left shift semigroup on the Hilbert space $L^2(\mathbb{R})$ generated by the first 
derivative $\frac{\rm d}{{\rm d}x}$ defined on the domain $D(\frac{\rm d}{{\rm d}x})=\{f\in L^2(\mathbb{R})\:;\:\frac{\rm d}{{\rm d}x}f\in L^2(\mathbb{R})\}$. Writing down the 
abstract interpolation and extrapolation spaces \cite[Chapter II.5]{EN} gives the classical scale of 
Sobolev spaces
\[
\cdots \longrightarrow \mathcal{H}^{3}(\mathbb{R}) \stackrel{i_3^2}{\longrightarrow} \mathcal{H}^{2}(\mathbb{R}) \stackrel{i_2^1}{\longrightarrow} \mathcal{H}^1(\mathbb{R}) \stackrel{i_1^0}{\longrightarrow} L^2(\mathbb{R}) \stackrel{i_0^{-1}}{\longrightarrow} \mathcal{H}^{-1}(\mathbb{R}) \stackrel{i_{-1}^{-2}}{\longrightarrow} \mathcal{H}^{-2}(\mathbb{R})\stackrel{i_{-2}^{-3}}{\longrightarrow} \mathcal{H}^{-3}(\mathbb{R}) \rightarrow \cdots
\]
where the maps are all continuous. Taking the projective limit of this chain of spaces, i.e., endowing 
the intersection $\cap_{n\in\mathbb{N}}\mathcal{H}^n(\mathbb{R})$ with the coarsest linear topology which 
makes the inclusions $\cap_{n\in\mathbb{N}}\mathcal{H}^n(\mathbb{R})\rightarrow\mathcal{H}^k(\mathbb{R})$ 
for all $k\in\mathbb{N}$ continuous, yields the classical function space
\[
\mathcal{D}_{L^2}(\mathbb{R})=\proj{n\in\mathbb{N}}\mathcal{H}^n(\mathbb{R})
\]
studied by Schwartz \cite[\S\,8, p.~199]{Schwartz}. Taking the inductive limit, i.e., endowing the union 
$\cup_{n\in\mathbb{N}}\mathcal{H}^{-n}(\mathbb{R})$ with the finest linear topology which makes the inclusions 
$\mathcal{H}^k(\mathbb{R})\rightarrow\cup_{n\in\mathbb{N}}\mathcal{H}^{-n}(\mathbb{R})$ for all $k\in\mathbb{N}$ 
continuous, yields a subspace of the space of distributions which turns out to be isomorphic to the strong 
dual of $\mathcal{D}_{L^2}(\mathbb{R})$, i.e.,
\[
\mathcal{D}'_{L^2}(\mathbb{R}) \cong \ind{n\in\mathbb{N}}\mathcal{H}^{-n}(\mathbb{R})
\]
in a natural way. Also this space was investigated by Schwartz \cite[\S\,8, p.~200]{Schwartz}. Indeed, we 
have the following commutative diagram
\begin{equation}\label{EQ-1}
\begin{tikzcd}
 L^2(\mathbb{R}) \arrow{r}{i_0^{-1}} \arrow{d}{\id_{L^2(\mathbb{R})}} & \mathcal{H}^{-1}(\mathbb{R}) \arrow{r}{i_{-1}^{-2}}\arrow{d}{\Phi_1} & \mathcal{H}^{-2}(\mathbb{R})\arrow{r}{i_{-2}^{-3}} \arrow{d}{\Phi_2} & \mathcal{H}^{-3}(\mathbb{R})  \arrow{r}{} \arrow{d}{\Phi_3} & \cdots  \\              
L^2(\mathbb{R}) \arrow[swap]{r}{(i_1^0)'} & \mathcal{H}^{1}(\mathbb{R})' \arrow[swap]{r}{(i_2^1)'} & \mathcal{H}^{2}(\mathbb{R})' \arrow[swap]{r}{(i_3^2)'} & \mathcal{H}^{3}(\mathbb{R})' \arrow{r}{} & \cdots 
\end{tikzcd}
\end{equation}
where the maps $\Phi_n$ for $n\in\mathbb{N}$ are isomorphisms. Our first aim is to see that the corresponding inductive limits are isomorphic. We emphasize that this is not trivial just by having \textquotedblleft{}step-wise\textquotedblright{} isomorphisms. Indeed, we have for instance $\mathcal{H}^{-n}(\mathbb{R})\cong L^2(\mathbb{R})$ for each $n\in\mathbb{N}$ but $L^2(\mathbb{R})\not\cong\cup_{n\in\mathbb{N}}\mathcal{H}^{-n}(\mathbb{R})$, which shows that we have to be extremely careful when we \textquotedblleft{}identify\textquotedblright{} isomorphic spaces.

\smallskip

\color{black}

The suitable notion to address our first aim is that of \textit{equivalent inductive sequences}. Each row in the diagram \eqref{EQ-1} is a so-called inductive sequence, i.e., a sequence $(X_n,i_n^{n+1})_{n\in\mathbb{N}}$ of Banach spaces $X_n$ and linear and continuous maps $i_n^{n+1}\colon X_n\rightarrow X_{n+1}$ for $n\in\mathbb{N}$. Two such inductive sequences $(X_n,i_n^{n+1})_{n\in\mathbb{N}}$ and $(Y_n,j_n^{n+1})_{n\in\mathbb{N}}$ are said to be equivalent, if there are increasing sequences $(k(n))_{n\in\mathbb{N}}$ and $(\ell(n))_{n\in\mathbb{N}}$ of natural numbers with $n\leqslant \ell(n)\leqslant k(n)\leqslant \ell(n+1)$ and  linear and continuous maps $\alpha_n\colon Y_{\ell(n)}\rightarrow X_{k(n)}$, $\beta_n\colon X_{k(n)}\rightarrow Y_{\ell(n+1)}$  such that 
\begin{center}
\includegraphics[width=290pt]{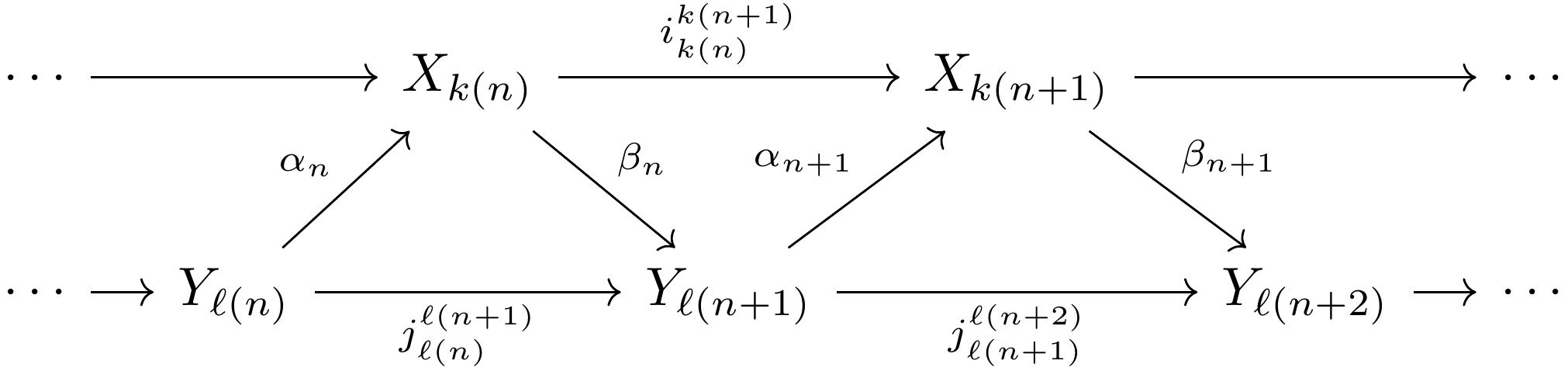}
\end{center}
commutes. As a matter of fact, two equivalent inductive sequences have isomorphic inductive limits.

\smallskip

We now see that the two inductive sequences in \eqref{EQ-1} are equivalent and we thus get that 
\[
\ind{n\in\mathbb{N}}\mathcal{H}^{-n}(\mathbb{R})\cong \ind{n\in\mathbb{N}}\mathcal{H}^{n}(\mathbb{R})'
\]
holds. Now we would like to conclude that the dual of a projective limit\,(=intersection) is equal to the inductive limit\,(=union) of the duals of the spaces in the sequence. This is indeed true but requires an open mapping theorem for topological vector spaces. Note that the important conclusion of the following argument is that we finally get the isomorphism \eqref{EQ-2} below. \color{black} We observe that $\mathcal{D}_{L^2}(\mathbb{R})$ is a reflexive Fr\'{e}chet space by Jarchow \cite[Proposition~11.5.5 and Corollary 11.4.3]{J} and thus distinguished by \cite[Remark after 13.4.5 on p.~280]{J}. 
Then by \cite[Corollary~13.4.4]{J} it follows that $\mathcal{D}'_{L^2}(\mathbb{R})$ is ultrabornological. As $\mathcal{H}^{-n}(\mathbb{R})\subseteq\mathcal{H}^{-(n+1)}(\mathbb{R})$ is dense for all $n\in\mathbb{N}$, we have that the corresponding sequence is reduced and \cite[Proposition 8.8.7]{J} implies that there is a natural isomorphism
\begin{equation}\label{EQ-2}
  \ind{n\in\mathbb{N}}\mathcal{H}^{n}(\mathbb{R})'\stackrel{\phi}{\longrightarrow} (\proj{n\in\mathbb{N}}\mathcal{H}^n(\mathbb{R}))'_b = \mathcal{D}'_{L^2}(\mathbb{R}) 
\end{equation}
of linear spaces. As $\phi|_{\mathcal{H}^n(\mathbb{R})'}\colon \mathcal{H}^n(\mathbb{R})' \rightarrow 
(\proj{n\in\mathbb{N}}\mathcal{H}^n(\mathbb{R}))'_b$ is continuous, it follows that $\phi$ is continuous and hence an isomorphism by the open mapping theorem \cite[Theorem 5.5.2]{J} as its domain is---as an LB-space---in particular webbed \cite[Corollary 5.3.3]{J} and its codomain is ultrabornological.

\medskip

We point out again, that the crucial fact for the above arguments is not just the existence of isomorphisms $\Phi_n$. In the current example this indeed might appear to be trivial by a well-known characterization of the Sobolev spaces, e.g., that $\mathcal{H}^{-n}(\mathbb{R})$ can be identified with the dual of $\mathcal{H}^{n}(\mathbb{R})$. However, exactly these identifications have to be compatible with the maps in the inductive sequences in order to induce an equivalence of the latter. Below, in Definition \ref{DFN-PD}, we employ a generalization of the notion of the \textquotedblleft{}dual with respect to a pivot space\textquotedblright{}, see Tucsnak, Weiss \cite[Chapter 2.10]{TW}, in order to construct families of isomorphisms that are compatible with the linking maps of the corresponding inductive sequences. The gap that we left in the example above will be closed by Corollary \ref{COR-2}.

\medskip

In the notation established by the second-named author in \cite{W}, the space $\proj{n\in\mathbb{N}}\mathcal{H}^n(\mathbb{R})$ is the \textit{universal interpolation space} associated with the shift semigroup on $L^2(\mathbb{R})$. According to the latter article, the \textit{universal extrapolation space} is defined to be the completion of $\ind{n\in\mathbb{N}}\mathcal{H}^{-n}(\mathbb{R})$. By the representation above and as the strong dual of a Fr\'{e}chet space is complete, see e.g. Meise, Vogt \cite[p.~296]{MV}, we see however that the inductive limit is already complete and thus forming the completion is dispensable.

\medskip

Haase \cite[p.~143ff]{Haase-Buch} and \cite[p.~221ff]{Haase} proposed a different notion of \textit{universal extrapolation space}: In the situation of our example, his extrapolation space is the union $U=\cup_{n\in\mathbb{N}}\mathcal{H}^{-n}(\mathbb{R})$ in which a net $(x_{\alpha})_{\alpha\in\mathcal{A}}\subseteq U$ converges by definition to $x\in U$ if
$$
\exists\:n\in\mathbb{N},\,\alpha_0\in\mathcal{A}\;\forall\:\alpha\geqslant\alpha_0\colon x,\,x_{\alpha}\in \mathcal{H}^{-n}(\mathbb{R}) \,\text{ and }\, x_\alpha\rightarrow x \,\text{ in }\,\mathcal{H}^{-n}(\mathbb{R})
$$
holds. Haase \cite[Remark 8.1.2]{Haase} motivates his definition by the fact that the inductive topology even in the case of a countable sequence of Banach spaces needs not to be Hausdorff.

\medskip

In \cite{W} several sequence space cases were studied in which the inductive limit of the extrapolation spaces turned out to be Hausdorff and to be complete. In this note we go one better and provide a general theorem that establishes that the inductive limit is Hausdorff and even complete whenever our initial Banach space is reflexive. Further, we identify conditions under which the duality of universal inter- and extrapolation space holds. Above, we gave already one example in which the corresponding limit space is even well-known since a long time ago.

\medskip
\section{Duals with respect to a pivot space}\label{SEC-2}

\smallskip

\red{For the whole section let $X$ be a reflexive Banach space with norm $\|\cdot\|_{X}$ and 
\[
A\colon D(A) \rightarrow X
\]
be a closed and densely defined operator on $X$ such that $A\colon D(A)\rightarrow X$ is invertible with $A^{-1}\in L(X)$. In particular we may think of $A$ as the generator of a \Cnull-semigroup on $X$. Note that in the latter case one can assume w.l.o.g.\,that $A^{-1}$ exists and belongs to $L(X)$, see \cite[p.~124]{EN}.

\medskip

Following~\cite{EN,W}, we introduce the interpolation and extrapolation spaces with respect to the operator~$A$. 
The \emph{$n$-th interpolation space} is defined as 
\[
X_n := (D(A^{n}), \|\cdot\|_{n}), \;\; \text{where} \;\; \|x\|_{n} := \|x\|_{X} + \|A^nx\|_{X},\;\;\text{for}\;\; x\in X_n.
\]
Note that the norm $\|\cdot\|_{n}$ is the graph norm with respect to~$A^{n}$. For the definition of the extrapolation
spaces, we define the norm
\[
\|x\|_{-n}=\|A^{-n}x\|_X, \;\; \text{where}\;\; A^{-n} := \left(A^{-1}\right)^{n},
\]
on $X$. We define the \emph{$n$-the extrapolation space}
\[
X_{-n}:=(X,\|\cdot\|_{-n})^{\hatt}
\]
as the completion of $X$ with respect to the norm $\|\cdot\|_{-n}$.

\medskip

As in the motivating example in the first section, we want to find a representation of the inductive limit of the spaces $X_{-n}$ as the dual space of a projective limit of a suitable sequence of Banach spaces. Before we start this construction, we first summarize some facts which follow easily from~\cite[Chapter 1]{N} and~\cite[Appendix B]{EN}. Given a Banach space $X$ with dual space $X'$, we denote by $\langle\cdot,\cdot\rangle$ the duality mapping between $X$ and $X'$, i.e., $\langle x, \phi\rangle := \phi(x)$, for $x\in X$ and $\phi\in X'$. We put
$$
D(A')=\bigl\{x'\in X'\:;\: \exists\:y'\in X'\;\forall\:x\in D(A)\colon \langle{}Ax,x'\rangle{}=\langle{}x,y'\rangle{}\bigr\}
$$
and denote by $A'\colon D(A')\rightarrow X'$, $A'x'=y'$ the dual operator of $A$. When we consider its powers we 
abbreviate $A'^{\,n}:=(A')^n$.

\begin{lem}\label{LEM-0} Let $X$ be a reflexive Banach space.
\begin{compactitem}\vspace{3pt}
\item[(i)] The operator $A'\colon D(A')\rightarrow X'$ is closed and densely defined.\vspace{3pt}
\item[(ii)] For $n\geqslant1$ the operator $A^n$ is invertible and we have \red{$(A^n)^{-1}=A^{-n}\in L(X)$}.\vspace{3pt}
\item[(iii)] For $n\geqslant 1$ we have $(A^{-n})'=(A')^{-n}$.\hfill\qed
\end{compactitem}
\end{lem}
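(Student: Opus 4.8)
The three assertions are mostly a matter of unwinding definitions, and the only point that needs a little care is the invertibility of $A'$ together with the identity $(A')^{-1}=(A^{-1})'$; the plan is to prove that first (it is the case $n=1$ of (iii)) and then reduce the rest to it.

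For (ii) I would argue by induction on $n$, the base case being the standing hypothesis. Assuming $D(A^{n})=A^{-n}(X)$ together with $A^{n}A^{-n}=\id_X$ and $A^{-n}A^{n}=\id_{D(A^n)}$, one unwinds $D(A^{n+1})=\{x\in D(A):Ax\in D(A^{n})\}$ and uses $D(A)=A^{-1}(X)$ to obtain $D(A^{n+1})=A^{-(n+1)}(X)$; the two composition identities one level up then follow by combining the inductive ones with $AA^{-1}=\id_X$ and $A^{-1}A=\id_{D(A)}$. Since $A^{-(n+1)}=(A^{-1})^{n+1}\in L(X)$ by construction, this exhibits $A^{n+1}\colon D(A^{n+1})\to X$ as a bijection with bounded inverse $A^{-(n+1)}$, which is exactly the claim.

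For (i), closedness of $A'$ is automatic: the graph $G(A')$ equals the annihilator in $X'\times X'$ of the subspace $M:=\{(Ax,-x):x\in D(A)\}$ of $X\times X$, and annihilators are norm closed. For density of $D(A')$ I would use reflexivity of $X$: if $D(A')$ were not dense, Hahn--Banach would provide a nonzero $x''\in X''$ vanishing on $D(A')$; writing $x''=x\in X$, the element $(x,0)$ then annihilates $G(A')=M^{\perp}$, hence lies in ${}^{\perp}(M^{\perp})=\overline{M}=M$, the last equality because $A$ is closed (so $M$ is already closed) and norm-closed subspaces are weakly closed. Reading off the two coordinates forces $x=0$, a contradiction. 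Alternatively one simply invokes \cite[Chapter~1]{N} and \cite[Appendix~B]{EN}.

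Finally (iii), whose very formulation presupposes that $A'$ is invertible. I would first show that $A'$ is invertible with $(A')^{-1}=(A^{-1})'\in L(X')$ by putting $B:=A^{-1}\in L(X)$ and dualising the identities $AB=\id_X$ and $BA=\id_{D(A)}$; since $B$ is bounded, the composition rule for adjoints applies cleanly on the bounded factor, and one reads off $(A^{-1})'A'\psi=\psi$ for $\psi\in D(A')$ (so $A'$ is injective) and $A'(A^{-1})'\psi=\psi$ for every $\psi\in X'$ together with $\operatorname{ran}((A^{-1})')\subseteq D(A')$ (so $A'$ maps onto $X'$, and comparing with the previous relation $\operatorname{ran}((A^{-1})')=D(A')$); hence $A'$ and $(A^{-1})'$ are mutually inverse. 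For general $n$ one combines this with (ii): $A^{-n}=(A^{-1})^{n}\in L(X)$, so iterating $(ST)'=T'S'$ for bounded $S,T$ yields $(A^{-n})'=((A^{-1})')^{n}=((A')^{-1})^{n}=(A')^{-n}$. The main obstacle throughout is the domain bookkeeping when passing to adjoints of the unbounded operators---specifically verifying that $\operatorname{ran}((A^{-1})')$ is all of $D(A')$ rather than merely contained in it, and that it is the density of $D(A)$ which lets one identify $(\id_{D(A)})'$ with $\id_{X'}$; everything else is routine.
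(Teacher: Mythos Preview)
Your proposal is correct. The paper itself gives no proof of this lemma at all---it simply records the three statements with a \qed, referring to \cite[Chapter~1]{N} and \cite[Appendix~B]{EN} in the preceding sentence and remarking afterwards only that (i) fails without reflexivity and that (iii) relies on (i). Your self-contained arguments (induction for (ii), the annihilator/bipolar argument for density in (i), and the direct verification that $(A^{-1})'$ is a two-sided inverse for $A'$ followed by iteration for (iii)) are the standard ones and are exactly what those references contain; you even note the citation route yourself as an alternative. There is nothing to correct.
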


Observe that it is well-known, that~(i) fails without reflexivity and that the proof of~(iii) relies on~(i). Remember that we only consider reflexive spaces, although some single arguments might hold true without this assumption, as, e.g., part~(ii) in the lemma above.

\medskip

Using that $X$ is reflexive, the main idea of the following construction is to find suitable subspaces of $X'$ which, up to isomorphisms, may act as preduals of the extrapolation spaces $X_{-n}$.

\begin{dfn}\label{DFN-PD} Let $X$ be reflexive and $n\geqslant1$. For $x\in X$ we put
\[
\|x\|_{\star,n}=\sup\big\{|\langle{}x,\psi\rangle{}|\:;\:\|\psi\|_{D(A'^{\,n})}\leqslant1\bigr\}
\]
which defines \red{a family} of new norms on $X$. Using these norms, we denote by
\begin{equation*}
X_n^d:=(D(A'^{\,n}),\|\cdot\|_{D(A'^{n})})
\end{equation*}
the $n$-th interpolation space with respect to the dual operator $A'$. Generalizing the Hilbert space notation from~\cite[p.~60]{TW}, we put
\[
(X_n^d)^{\star}:=(X,\|\cdot\|_{\star,n})^{\hatt}
\]
and call this space the \textquotedblleft{}dual of $X_n^d$ with respect to the pivot space $X$\textquotedblright{}. 
\end{dfn}}

The spaces $X_n^d$ defined above will,  up to isomorphisms, act as the aforementioned preduals of the extrapolation spaces~$X_{-n}$. In order to see this relation, we need the spaces $(X_n^d)^\star$. The following lemma is used to establish isomorphisms between $X_{-n}$ and $(X_n^d)^\star$. In the case where $X$ is Hilbert and $n=1$, it is proved in \cite[Proposition~2.10.2]{TW}.

\medskip

\begin{lem}\label{LEM-1} Let $X$ be reflexive and $n\geqslant1$. Then, the norms $\|\cdot\|_{\star,n}$ and $\|\cdot\|_{-n}$ are equivalent.
\end{lem}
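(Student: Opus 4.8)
The plan is to show the two norms $\|\cdot\|_{\star,n}$ and $\|\cdot\|_{-n}$ on $X$ are equivalent by computing $\|x\|_{\star,n}$ explicitly in terms of $A'^{\,n}$ and then transferring the computation back to $X$ via duality and Lemma \ref{LEM-0}. First I would unwind the definition: for $x\in X$,
\[
\|x\|_{\star,n}=\sup\bigl\{|\langle x,\psi\rangle|\:;\:\psi\in D(A'^{\,n}),\ \|\psi\|_{D(A'^{\,n})}\leqslant1\bigr\},
\]
where $\|\psi\|_{D(A'^{\,n})}=\|\psi\|_{X'}+\|A'^{\,n}\psi\|_{X'}$. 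The key observation is that, by Lemma \ref{LEM-0}(ii)--(iii), $A'^{\,n}$ is invertible with bounded inverse $(A')^{-n}=(A^{-n})'\in L(X')$; hence the graph norm $\|\cdot\|_{D(A'^{\,n})}$ is equivalent to the norm $\psi\mapsto\|A'^{\,n}\psi\|_{X'}$ (the standard fact that for a boundedly invertible operator the graph norm and $\|A'^{\,n}\,\cdot\,\|$ are equivalent, with constants depending only on $\|(A')^{-n}\|$). So up to equivalence of norms we may replace the constraint $\|\psi\|_{D(A'^{\,n})}\leqslant1$ by $\|A'^{\,n}\psi\|_{X'}\leqslant1$.

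Next I would substitute $\varphi:=A'^{\,n}\psi$, which ranges over all of $X'$ as $\psi$ ranges over $D(A'^{\,n})$, with $\psi=(A')^{-n}\varphi=(A^{-n})'\varphi$. Then
\[
\sup\bigl\{|\langle x,\psi\rangle|\:;\:\|A'^{\,n}\psi\|_{X'}\leqslant1\bigr\}
=\sup\bigl\{|\langle x,(A^{-n})'\varphi\rangle|\:;\:\varphi\in X',\ \|\varphi\|_{X'}\leqslant1\bigr\}.
\]
By the definition of the dual operator, $\langle x,(A^{-n})'\varphi\rangle=\langle A^{-n}x,\varphi\rangle$ for all $x\in X$ (here $A^{-n}\in L(X)$ by Lemma \ref{LEM-0}(ii), so this is just the adjoint relation for bounded operators and no domain issues arise). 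Taking the supremum over the unit ball of $X'$ gives exactly $\|A^{-n}x\|_{X}=\|x\|_{-n}$, using that $X$ is reflexive so that the canonical map $X\to X''$ is isometric. Combining this with the norm-equivalence from the previous step yields constants $c,C>0$ with $c\|x\|_{-n}\leqslant\|x\|_{\star,n}\leqslant C\|x\|_{-n}$ for all $x\in X$, which is the claim.

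The main obstacle is bookkeeping rather than anything deep: one must be careful that $A'^{\,n}=(A')^n$ really is boundedly invertible (this is Lemma \ref{LEM-0}(ii) applied to $A'$, which is legitimate since $A'$ is closed, densely defined and invertible with $(A')^{-1}=(A^{-1})'\in L(X')$ by Lemma \ref{LEM-0}(i),(iii)), and that the substitution $\varphi=A'^{\,n}\psi$ is a bijection of $D(A'^{\,n})$ onto $X'$ so no part of the supremum is lost. The only genuinely delicate point is the quantitative equivalence of the graph norm $\|\cdot\|_{D(A'^{\,n})}$ with $\|A'^{\,n}\,\cdot\,\|_{X'}$: for $\psi\in D(A'^{\,n})$ one has $\|\psi\|_{X'}=\|(A')^{-n}A'^{\,n}\psi\|_{X'}\leqslant\|(A')^{-n}\|\,\|A'^{\,n}\psi\|_{X'}$, so $\|\psi\|_{D(A'^{\,n})}\leqslant(1+\|(A')^{-n}\|)\,\|A'^{\,n}\psi\|_{X'}$, while trivially $\|A'^{\,n}\psi\|_{X'}\leqslant\|\psi\|_{D(A'^{\,n})}$; this gives the required two-sided bound and hence controls how the supremum changes when the constraint set is rescaled.
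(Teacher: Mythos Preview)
Your argument is correct and is essentially the paper's own proof run in the opposite direction: the paper starts from $\|x\|_{-n}=\|A^{-n}x\|_X$, passes to a supremum over $X'$, substitutes $\psi=(A')^{-n}\varphi$, and then invokes the equivalence of the homogeneous norm $\|A'^{\,n}\,\cdot\,\|_{X'}$ with the graph norm on $D(A'^{\,n})$, whereas you begin with $\|x\|_{\star,n}$ and trace the same chain backwards. One small remark: the identity $\|A^{-n}x\|_X=\sup\{|\langle A^{-n}x,\varphi\rangle|:\|\varphi\|_{X'}\leqslant1\}$ follows already from Hahn--Banach and does not require reflexivity (the canonical map $X\to X''$ is always isometric); reflexivity is used earlier, via Lemma~\ref{LEM-0}, to ensure that $A'$ is densely defined and that $(A')^{-n}=(A^{-n})'$.
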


\begin{proof} 
  Let $x\in X$ be given. From Lemma \ref{LEM-0} we get $\langle{}A^{-n}x,\varphi\rangle{}=\langle{}x, (A^{-n})'\varphi\rangle{}=\langle{}x,(A')^{-n}\varphi\rangle{}$ and thus we obtain
  \[
    \|x\|_{-n}=\|A^{-n}x\|_X = \sup\bigl\{|\langle{}A^{-n}x,\varphi\rangle{}|\:;\:\|\varphi\|_{X'}\leqslant1\bigr\} =\sup\bigr\{|\langle{}x,(A')^{-n}\varphi\rangle{}|\:;\:\|\varphi\|_{X'}\leqslant1\bigr\}.
  \]
  Since $A'^{\,n}\colon D(A'^{\,n})\rightarrow X'$ is bijective with inverse $(A')^{-n}$ we can substitute $\psi=(A')^{-n}\varphi$ and get from the above
  \[
    \|x\|_{-n}  = \sup\bigl\{|\langle{}x,\psi\rangle{}|\:;\:\|A'^{\,n}\psi\|_{X'}\leqslant1\bigr\}.
  \]
  If we endow $D(A'^{\,n})$ with the homogeneous norm $\|\cdot\|_{A',n}$, i.e., $\|x\|_{A',n}=\|A'^nx\|_X$, then the map $A'^{\,n}\colon D(A'^{\,n})\rightarrow X'$ is an isometric isomorphism. It follows
  \[
    \|x\|_{-n} =\sup\bigl\{|\langle{}x,\psi\rangle{}|\:;\:\|A'^{\,n}\psi\|_{X'}\leqslant1\bigr\} =\sup\bigl\{|\langle{}x,\psi\rangle{}|\:;\:\|\psi\|_{A',n}\leqslant1\bigr\}
  \]
  which shows that $\|\cdot\|_{-n}$ and $\|\cdot\|_{\star,n}$ are equivalent, as the homogeneous norm is equivalent to 
  the graph norm.
\end{proof}

\medskip

Since $A'^{\,n}\colon X_n^d\rightarrow X'$ is an isomorphism and $X$ is reflexive, $X_n^d$ is again reflexive. By Lemma~\ref{LEM-1} we can define 
\[
  \Phi_n\colon X_{-n}\rightarrow (X_n^d)^{\star}
\]
to be the unique extension of $\id_X$. Moreover, we define 
\[
  \Psi_n\colon (X_n^d)^{\star}\rightarrow (X_n^d)'
\]
by
\[
  [\Psi_n(z)](\varphi)=\lim_{k\rightarrow\infty}\langle{}z_k,\varphi\rangle_{X,X'} \;\; \text{ for }\;z\in (X_n^d)^{\star},\,\varphi\in X_n^d
\] 
where $(z_k)_{k\in\mathbb{N}}\subseteq X$ with $z_k\rightarrow z$ in $(X_n^d)^{\star}$. Finally, we denote by $i_n\colon X\rightarrow (X_n^d)^{\star}$ the inclusion and by $(j_n)'\colon X \rightarrow (X_n^d)'$ the dual of the inclusion $j_n\colon X_n^d\rightarrow X'$.

\medskip

\begin{lem}\label{PROP-2-Banach} 
  Let $X$ be reflexive and $n\geqslant1$. Then, $\Psi_n$ is an isomorphism and $\Psi_n\circ{} i_n=(j_n)'$ holds.
\end{lem}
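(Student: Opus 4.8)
The plan is to show that $\Psi_n$ is a well-defined linear isometric bijection and that it intertwines the two canonical copies of $X$ inside $(X_n^d)^\star$ and $(X_n^d)'$. First I would check that $\Psi_n$ is well-defined: for $z\in(X_n^d)^\star$ pick $(z_k)\subseteq X$ with $z_k\to z$ in $(X_n^d)^\star$; by Lemma \ref{LEM-1} this means $(z_k)$ is Cauchy for $\|\cdot\|_{\star,n}$, hence for every $\varphi\in X_n^d=D(A'^{\,n})$ the scalars $\langle z_k,\varphi\rangle_{X,X'}$ form a Cauchy sequence (since $|\langle z_k-z_\ell,\varphi\rangle|\leqslant\|z_k-z_\ell\|_{\star,n}\|\varphi\|_{D(A'^{\,n})}$), so the limit exists; a standard interleaving argument shows it is independent of the approximating sequence. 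Linearity of $z\mapsto\Psi_n(z)$ and of $\varphi\mapsto[\Psi_n(z)](\varphi)$ is immediate, and the same estimate gives $|[\Psi_n(z)](\varphi)|\leqslant\|z\|_{\star,n}\|\varphi\|_{D(A'^{\,n})}$, so $\Psi_n(z)\in(X_n^d)'$ with $\|\Psi_n(z)\|\leqslant\|z\|_{\star,n}$; that is, $\Psi_n$ is continuous of norm $\leqslant1$.

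Next I would prove the reverse estimate, which gives injectivity and the isometry bound from below. Since $X$ is dense in $(X_n^d)^\star$ by construction, it suffices to establish $\|x\|_{\star,n}\leqslant\|\Psi_n(x)\|_{(X_n^d)'}$ for $x\in X$, and for such $x$ we have $\Psi_n(x)=(j_n)'(x)$, i.e. $[\Psi_n(x)](\varphi)=\langle x,\varphi\rangle_{X,X'}$ for $\varphi\in X_n^d$. But by the very definition of $\|\cdot\|_{\star,n}$ in Definition \ref{DFN-PD},
\[
\|x\|_{\star,n}=\sup\bigl\{|\langle x,\psi\rangle|\:;\:\psi\in D(A'^{\,n}),\ \|\psi\|_{D(A'^{\,n})}\leqslant1\bigr\}=\|(j_n)'(x)\|_{(X_n^d)'},
\]
so in fact $\Psi_n$ is isometric on the dense subspace $X$, hence isometric on all of $(X_n^d)^\star$. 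The identity $\Psi_n\circ i_n=(j_n)'$ is now exactly the computation just used, valid on $X$ by definition of the limit (the constant sequence $z_k=x$ approximates $i_n(x)$).

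It remains to see that $\Psi_n$ is surjective. Here I would use reflexivity of $X_n^d$, established in the paragraph preceding the lemma via the isomorphism $A'^{\,n}\colon X_n^d\to X'$. The range of $\Psi_n$ is a closed subspace of $(X_n^d)'$ (being the isometric image of a Banach space), and it contains $(j_n)'(X)$; so it is enough to show $(j_n)'(X)$ is dense in $(X_n^d)'$. Since $j_n\colon X_n^d\to X'$ has dense range (because $D(A'^{\,n})$ is dense in $X'$ — this follows from Lemma \ref{LEM-0}(i) iterated, $D(A'^{\,n})$ being a core-type dense subspace), its adjoint $(j_n)'\colon X=X''\to(X_n^d)'$ is injective with weak-$*$ dense range; combined with reflexivity of $X_n^d$ — so that weak-$*$ and weak topologies on $(X_n^d)'$ coincide — and the fact that the range is already norm-closed, we conclude $(j_n)'(X)=(X_n^d)'$, hence $\Psi_n$ is onto. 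The main obstacle I anticipate is precisely this surjectivity step: one must be careful that density of $D(A'^{\,n})$ in $X'$ in the $X'$-norm (not merely the graph norm) holds, which uses closedness of $A'^{\,n}$ and reflexivity; given that, the adjoint/annihilator bookkeeping is routine.
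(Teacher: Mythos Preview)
Your argument is correct and matches the paper's approach closely: isometry on the dense subspace $X$ follows directly from the definition of $\|\cdot\|_{\star,n}$, and surjectivity uses reflexivity of $X_n^d$ (the paper phrases this as a direct Hahn--Banach contradiction---a nonzero annihilator in $(X_n^d)''=X_n^d\subseteq X'$ would vanish on all of $X$---while you route through the adjoint/annihilator duality; these are the same argument). One small correction that also dissolves your anticipated obstacle: weak-$*$ density of the range of $(j_n)'$ in $(X_n^d)'$ follows from \emph{injectivity} of $j_n$, not from density of its range (dense range of $j_n$ gives injectivity of $(j_n)'$, which you do not use); since $j_n$ is an inclusion this injectivity is automatic, so no appeal to $D(A'^{\,n})$ being $X'$-norm dense is needed at all. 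Finally, the line ``$(j_n)'(X)=(X_n^d)'$'' is a slight overstatement---what you actually obtain (and all you need) is that the norm closure of $(j_n)'(X)$ is $(X_n^d)'$, whence the closed range of $\Psi_n$ is everything.
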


\begin{proof} It is straightforward to see that $(\Psi_n(z))(\varphi)$ 
  is well-defined, i.e., that the limit exists and is independent of the choice of the sequence. The linearity 
  of $\Psi_n(z)$ and $\Psi_n$ follows from the definition of $\Psi_n$ and the continuity of the vector space 
  operations. From $\|z\|_{\star,n}=\sup\{|\langle z,\phi\rangle| \:;\: \|\phi\|_{D(A'^{\,n})}\leqslant 1\}$ we may deduce 
  $|\langle z,\varphi\rangle|\leqslant  \|\varphi\|_{D(A'^{\,n})} \|z\|_{\star,n}$ for $z\in X$ and $\varphi\in X_n^d$ and 
  by density for $z\in (X_n^d)^{\star}$. Hence
  \[
    \Psi_n(z) \in (X_n^d)' \; \text{ and } \; |(\Psi_n(z))(\varphi)| \leqslant \|\varphi\|_{X_n^d} \|z\|_{\star,n}.
  \]
  Therefore $\|\Psi_n(z)\|_{(X_n^d)'} \leqslant \|z\|_{\star,n}$ and $\Psi_n\colon (X_n^d)^{\star} \to (X_n^d)'$ is continuous.
  
  \medskip
  
  For $z\in X$ we have $(\Psi_n(z))(\varphi)=\langle z, \varphi\rangle$ since the constant sequence $(z)_{k\in\mathbb{N}}$ converges to $z$.
  Therefore we get for $z\in X$
  \[
    \|\Psi_n(z)\|_{(X_n^d)'} = \sup\{|\langle \Psi_n(z),\varphi\rangle| \:;\: \|\varphi\|_{X_n^d}\leqslant 1\} 
    = \sup\{|\langle z, \varphi \rangle| \:;\: \|\varphi\|_{X_n^d}\leqslant 1\} = \|z\|_{\star,n}.
  \]
  Since $X\subseteq (X_n^d)^\star$ is dense and $\Psi_n$ is continuous, we may conclude that $\|\Psi_n(z)\|_{(X_n^d)'}=\|z\|_{\star,n}$ holds for all $z\in(X_n^d)^\star$. In other words $\Psi_n$ is an isometric embedding and hence has a closed range. In order to finish the proof, we show that the range is also dense. Assume for a contradiction that $\Psi_n((X_n^d)^\star)\subseteq (X_n^d)'$ is not a dense subset. Then the Hahn-Banach Theorem implies the existence of a functional $0\neq\varphi\in (X_n^d)'' = X_n^d$ where $\langle \Psi_n(z),\varphi\rangle = 0$ for all $z\in (X_n^d)^\star$. In particular, we get $\langle z,\varphi \rangle = 0$ for all $z\in X$ which is a contradiction since $X_n^d\subseteq X'$ and therefore $0\neq\varphi\in X'$ with $\langle z,\varphi\rangle = 0$ for all $z\in X$. Finally, the equality
  \[
    (\Psi_n(i_n(z)))(\varphi) = \langle \Psi_n(z),\varphi\rangle = \langle z,\varphi \rangle = ((j_n)'(z))(\varphi)
  \]
  shows $\Psi_n\circ i_n = (j_n)'$.
\end{proof}

\medskip

We mention, that the above proof was inspired by~\cite[Proposition~2.9.2]{TW}, where a similar statement is shown when $X$ is a Hilbert space and $n=1$.

\medskip

\color{black}
\section{Universal inter- and extrapolation spaces}\label{SEC-3}

\smallskip

In this section, we now form inductive and projective limits of the \red{sequences} considered before. \red{The main result of this section is the following theorem which provides a representation of the inductive limit of the extrapolation spaces for a densely defined, closed operator having a bounded inverse. In the situation of~\cite{W}, where $A$ is the generator of a \Cnull-semigroup, this inductive limit was called the universal extrapolation space.
Note that in the latter case one can assume w.l.o.g.\,that $A^{-1}$ exists and belongs to $L(X)$, see \cite[p.~124]{EN}.}

\begin{thm}\label{THM} 
  Let $X$ be a reflexive Banach space and $A\colon D(A)\rightarrow X$ be a \red{densely defined, closed operator} such that 
  $A^{-1}$ exists and belongs to $L(X)$. Then we have
  \begin{equation}
    (\proj{n\in\mathbb{N}}X_n^d)'_b\cong \ind{n\in\mathbb{N}} X_{-n}.
  \end{equation}
  In particular, the inductive limit $\ind{n\in\mathbb{N}} X_{-n}$ is complete.
\end{thm}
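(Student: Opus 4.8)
The plan is to mirror the argument sketched in Section~\ref{SEC-1} for the Sobolev scale, but now using the pivot-duality machinery of Section~\ref{SEC-2} to produce \emph{compatible} families of isomorphisms between the two inductive sequences $(X_{-n})_{n\in\mathbb{N}}$ and $((X_n^d)')_{n\in\mathbb{N}}$. First I would compose the maps from Section~\ref{SEC-2}: by Lemma~\ref{LEM-1} the map $\Phi_n\colon X_{-n}\to(X_n^d)^{\star}$ (the continuous extension of $\id_X$) is an isomorphism, and by Lemma~\ref{PROP-2-Banach} the map $\Psi_n\colon(X_n^d)^{\star}\to(X_n^d)'$ is an isometric isomorphism with $\Psi_n\circ i_n=(j_n)'$. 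Hence $\Theta_n:=\Psi_n\circ\Phi_n\colon X_{-n}\to(X_n^d)'$ is an isomorphism that restricts on the dense subspace $X$ to the dual inclusion $(j_n)'$. The key point to verify is that these $\Theta_n$ intertwine the linking maps of the two inductive sequences, i.e.\ that $\Theta_{n+1}\circ(\text{incl}_{X_{-n}\to X_{-(n+1)}})=(\text{incl})'\circ\Theta_n$; since both sides are continuous and agree on the dense subspace $X$ (where everything is induced by duality from the chain $X_n^d\hookrightarrow X_{n+1}^d\hookrightarrow X'$), they agree everywhere. This shows the two inductive sequences are isomorphic step-by-step in a way compatible with the links, so in particular $\ind_{n}X_{-n}\cong\ind_{n}(X_n^d)'$.

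Next I would identify $\ind_{n}(X_n^d)'$ with the strong dual of $\proj_{n}X_n^d$. Here the argument is exactly the one carried out for the Sobolev case: $\proj_{n}X_n^d$ is a Fr\'echet space; one checks it is reflexive (each $X_n^d$ is reflexive, as noted before Lemma~\ref{PROP-2-Banach}, because $A'^{\,n}\colon X_n^d\to X'$ is an isomorphism and $X'$ is reflexive, and a reduced projective limit of reflexive Fr\'echet spaces is reflexive by Jarchow~\cite[Prop.~11.5.5, Cor.~11.4.3]{J}), hence distinguished~\cite[p.~280]{J}, hence its strong dual is ultrabornological~\cite[Cor.~13.4.4]{J}. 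The inclusions $X_n^d\hookrightarrow X_{n+1}^d$ are dense (they contain $X$, which is dense in each, or more directly $D(A'^{\,n+1})\subseteq D(A'^{\,n})$ densely since $A'$ is densely defined and closed), so the projective sequence is reduced, and \cite[Prop.~8.8.7]{J} yields a natural algebraic isomorphism $\phi\colon\ind_{n}(X_n^d)'\to(\proj_{n}X_n^d)'_b$. Continuity of $\phi$ is clear from the universal property of the LB-topology; it is an isomorphism by the open mapping theorem~\cite[Thm.~5.5.2]{J}, since its domain is an LB-space, hence webbed~\cite[Cor.~5.3.3]{J}, and its codomain is ultrabornological.

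Combining the two isomorphisms gives $(\proj_{n}X_n^d)'_b\cong\ind_{n}(X_n^d)'\cong\ind_{n}X_{-n}$, which is the displayed formula. The ``in particular'' statement then follows because the strong dual of a Fr\'echet space is always complete (Meise--Vogt~\cite[p.~296]{MV}), so $\ind_{n}X_{-n}$, being isomorphic to such a dual, is complete; in particular it is Hausdorff, so forming the completion in the definition of the universal extrapolation space is superfluous.

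\textbf{Main obstacle.} The only genuinely non-routine step is the compatibility of the isomorphisms $\Theta_n$ with the linking maps. Everything reduces to the behavior on the dense subspace $X$, where $\Theta_n$ acts as $(j_n)'$, i.e.\ by restricting a functional on $X'$ along $X_n^d\hookrightarrow X'$; one must check that chasing the inclusion $X\hookrightarrow X$ through $\Theta_{n+1}$ and then restricting $X_{n+1}^d\hookrightarrow X_n^d$ gives the same functional as $\Theta_n$ followed by the link $(X_n^d)'\to(X_{n+1}^d)'$, which is the genuinely commutative part of the diagram of inclusions $X_{n+1}^d\hookrightarrow X_n^d\hookrightarrow X'$ dualized. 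Once this is pinned down, density and continuity do the rest. A secondary technical point is verifying that the link $(X_n^d)'\to(X_{n+1}^d)'$ one obtains really is the transpose of the inclusion $X_{n+1}^d\hookrightarrow X_n^d$ and hence matches the inductive-sequence structure used in the projective-limit duality theorem — but this is a direct computation from the definitions of $\Psi_n$ and $(j_n)'$.
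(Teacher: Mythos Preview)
Your proposal is correct and follows essentially the same route as the paper: establish step-wise isomorphisms $X_{-n}\cong(X_n^d)^{\star}\cong(X_n^d)'$ via $\Phi_n$ and $\Psi_n$, verify compatibility with the linking maps by checking on the dense copy of $X$, and then invoke reflexivity of $\proj_{n}X_n^d$ to identify its strong dual with $\ind_{n}(X_n^d)'$. The only slip is the parenthetical ``they contain $X$, which is dense in each'' --- the spaces $X_n^d=D(A'^{\,n})$ sit inside $X'$, not $X$, so density of $X_{n+1}^d\hookrightarrow X_n^d$ should be argued as you do immediately afterwards (or via the reference the paper uses), not via $X$.
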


In order to prove this theorem, we first have to specify the linking maps between the spaces defined in Section~\ref{SEC-2} in more detail. We start with the linking maps between the extrapolation spaces, i.e. the maps
\[
  i_{n}^{n+1}\colon X_{-n}\to X_{-(n+1)}.
\]
In~\cite[Remark after II.5.6]{EN}\red{, where $A$ is always assumed to be a generator of a \Cnull-semigroup,} it is mentioned, that
\begin{equation}\label{EN}
  X_{-(n+m)}=(X_{-(n)})_{-m}
\end{equation}
holds for all $n$, $m\geqslant1$, where $X_{-n}$ for $n\geqslant1$ is the $n$-th extrapolation space as defined above. Indeed, the above equality holds in the sense of isometric isomorphisms that constitute an equivalence of inductive sequences. We first observe that, given a closed and densely defined operator~$A$ with $0\in\rho(A)$, then $A_{-n}$ has the same properties. Therefore we can define 
\[
  Y_{-n}:=(\cdots(X)_{-1}\cdots)_{-1}
\]
by forming $n$-times the first extrapolation space. We denote by $j_{n}^{n+1}\colon Y_{-n}\rightarrow Y_{-(n+1)}$ the inclusion maps. 
As $X\subseteq Y_{-n}$ and $X\subseteq X_{-n}$ are dense and $\|x\|_{-n}=\|x\|_{Y_{-n}}$ holds for $x\in X$, the identity $\id_X$ extends 
to an isometric isomorphism $\Theta_n\colon Y_{-n}\rightarrow X_{-n}$. We define the maps $i_{n}^{n+1}\colon X_{-n}\rightarrow X_{-(n+1)}$ 
via $i_{n}^{n+1}:=\Theta_{n+1}\circ j_{n}^{n+1}\circ \Theta_n^{-1}$, i.e., the diagram
\begin{center}
  \begin{tikzcd}
    X \arrow{r} \arrow{d}{\id_X} &\cdots \arrow{r} & Y_{-n} \arrow{r}{j_n^{n+1}}\arrow{d}{\Theta_n} & Y_{-(n+1)} \arrow{r}\arrow{d}{\Theta_{n+1}}  &\cdots\\
    X \arrow{r} &\cdots \arrow{r} & X_{-n} \arrow[swap]{r}{i_n^{n+1}}& X_{-(n+1)} \arrow{r} &\cdots
  \end{tikzcd}
\end{center}
is commutative. Note that the mappings $\Theta_n$ are only used to define the linking maps $i_{n}^{n+1}$ and will not be used the the sequel.

\medskip

Since the space $(X_n^d)^{\star}$ is defined as the completion of $X$ with respect to the norm $\|\cdot\|_{\star,n}$, $X$ is dense in $(X_n^d)^{\star}$ for all $n\geqslant 1$. We can therefore define the mapping
\[
  (i_n^{n+1})^{\star}\colon (X_n^{d})^{\star}\to (X_{n+1}^{d})^{\star}
\]
as the extension of the identity on $X$. In addition, we define the mapping 
\[
  (i_n)^{\star}\colon X\to (X_n^d)^{\star}
\]
to be the canonical embedding of $X$ into $(X_n^d)^{\star}$. In other words, the above construction means that $(i_n^{n+1})^\star$ is the unique continuous linear mapping satisfying $(i_{n+1})^{\star}=(i_n^{n+1})^\star\circ (i_n)^\star$. Since $X_n^d$ is defined as $D(A'^{\,n})$ equipped with the graph norm, the mappings 
\[
  j_n := \id_X\!|_{D(A'^{\,n})}\colon X_n^d\to X', \; \text{ and } \;
  j_{n+1}^{n}:=\id_{X_{n}^d}\!|_{D(A'^{n+1})}\colon X_{n+1}^{d}\to X_{n}^{d}
\]
are continuous inclusions. By \cite[Proposition~6.2]{AMK1992} the operators also have dense images. Therefore, we obtain by duality the maps
\[
  (j_n)' \colon X\to (X_{n}^{d})' \; \text{ and } \; (j_{n+1}^{n})'\colon (X_{n}^{d})'\to (X_{n+1}^{d})'
\]
which satisfy $(j_{n+1})' = (j_{n+1}^{n})' \circ (j_n)'$.

\medskip
\color{black}
\begin{proof}[Proof of Theorem~\ref{THM}]
  We divide the proof into three steps.
  \medskip

  As a first step we show that the sequence $(\Phi_n)_{n\in\mathbb{N}}$ defined in Section~\ref{SEC-2} is an equivalence between the inductive sequences $(X_{-n},i_n^{n+1})_{n\in\mathbb{N}}$ and $((X_{n}^d)^{\star},(i_n^{n+1})^\star)_{n\in\mathbb{N}}$.
  We have to show that the diagram
  \begin{equation}\label{PROP-3}
    \begin{tikzcd}
      X \arrow{r} \arrow{d}{\id_X} &\cdots \arrow{r} & X_{-n} \arrow{r}{i_n^{n+1}}\arrow{d}{\Phi_n} & X_{-(n+1)} \arrow{r}\arrow{d}{\Phi_{n+1}}  &\cdots\\
      X \arrow{r} &\cdots \arrow{r} & (X_n^d)^{\star} \arrow[swap]{r}{(i_n^{n+1})^{\star}} & (X_{n+1}^d)^{\star} \arrow{r} &\cdots
    \end{tikzcd},
  \end{equation}
  is commutative. Note that by definition both $\Phi_n\circ (i_n^{n+1})^{\star}$ and $i_n^{n+1}\circ\Phi_{n+1}$, restricted to $X$, coincide with the identity on $X$, which implies the commutativity since $X$ is dense both in $X_{-n}$ and $(X_{n+1}^d)^\star$.

  \medskip

  As a second step, we show that the sequence of maps $(\Psi_n)_{n\in\mathbb{N}}$ is an equivalence between the inductive sequences $((X_{n}^d)^{\star},(i_n^{n+1})^{\star})_{n\in\mathbb{N}}$ and $((X_{n}^d)',(j_{n+1}^{n})')_{n\in\mathbb{N}}$. Here, we have a commutative diagram
  \begin{equation}\label{PROP-4-Banach}
    \begin{tikzcd}
      X \arrow{r}\arrow{d}{\operatorname{id}_X} &\cdots \arrow{r} & (X_n^d)^{\star} \arrow{r}{(i_n^{n+1})^{\star}} \arrow{d}{\Psi_n} 
      & (X_{n+1}^d)^{\star} \arrow{r}\arrow{d}{\Psi_{n+1}} &\cdots\\
      X \arrow{r} &\cdots \arrow{r} &  (X_n^d)' \arrow[swap]{r}{(j_{n+1}^n)'}         
      & (X_{n+1}^d)' \arrow{r} &\cdots
    \end{tikzcd}
  \end{equation}
  by Lemma \ref{PROP-2-Banach}. 
  Indeed, for $x\in X$ we have 
  \begin{align*}
    \Psi_{n+1}((i_n^{n+1})^{\star}((i_{n})^{\star}(x))) &= \Psi_{n+1}((i_{n+1})^{\star}(x))  = (j_{n+1})'(x)\\
                                                        &=(j_{n+1}^{n})'((j_n)'(x)) = (j_{n+1}^{n})'(\Psi_n((i_n)^{\star}(x)))
  \end{align*}
  because $(i_{n+1})^{\star}=(i_{n}^{n+1})^{\star}\circ (i_n)^{\star}$ and $(j_{n+1})'=(j_{n+1}^{n})'\circ (j_n)'$. Since
  $(i_n)^{\star}(X)$ is a dense subspace of $(X_n^d)^{\star}$ the commutativity follows.

  \medskip

  Using these facts, we are now able to to prove the claimed duality result.
  By~\cite[25.12, 25.14 and 25.15]{MV}, $\proj{n\in\mathbb{N}} X_n^d$ is a reflexive space, being a projective limit of reflexive Banach spaces, and therefore it is distinguished; in particular its dual is isomorphic to $\ind{n\in\mathbb{N}} (X_n^d)'$ by~\cite[Definition~9 and Proposition~10, pp.~84--85]{BIntroduction}.
\end{proof}

\begin{rem}
  We mention that \eqref{PROP-3} yields $\ind{n\in\mathbb{N}}X_{-n}\cong \ind{n\in\mathbb{N}}(X_n^d)^{\star}$, even if the limit spaces are not Hausdorff. In particular the universal extrapolation space $X_{-\infty}=(\ind{n\in\mathbb{N}}X_{-n})^{\hatt}$ in the sense of \cite{W} exists if the sequence of the pivot duals is known to have a Hausdorff inductive limit.
\end{rem}

\medskip

In the case where $X$ is not only a Banach space but a Hilbert space and the operator $A$ is in addition self-adjoint, we may get the following stronger result. For a Hilbert space $X$, in order to avoid confusion with the duality mapping, we use the notation $(\cdot,\cdot)$ for the scalar product of $X$.

\medskip

\begin{thm}\label{THM-Hilbert} 
  Let $X$ be a Hilbert space and $A\colon D(A)\rightarrow X$ be a densely defined, self-adjoint operator such that $A^{-1}$ exists and belongs to $L(X)$. Then we have
  \begin{equation}
    (\proj{n\in\mathbb{N}}X_n)'_b\cong \ind{n\in\mathbb{N}} X_{-n}.
  \end{equation}
  In particular, the inductive limit $\ind{n\in\mathbb{N}} X_{-n}$ is complete.
\end{thm}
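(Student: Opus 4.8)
The plan is to reduce Theorem~\ref{THM-Hilbert} to Theorem~\ref{THM} by identifying, via the Riesz isomorphism, the dual interpolation spaces $X_n^d$ with the interpolation spaces $X_n$. A Hilbert space is reflexive and a self-adjoint operator is closed and densely defined, so Theorem~\ref{THM} already applies and yields $(\proj{n\in\mathbb{N}} X_n^d)'_b\cong\ind{n\in\mathbb{N}} X_{-n}$ together with the completeness of the inductive limit. Since the spaces $X_{-n}$ and the linking maps $i_n^{n+1}$ between them are literally the same as in Theorem~\ref{THM}, it only remains to produce a topological isomorphism $\proj{n\in\mathbb{N}} X_n\cong\proj{n\in\mathbb{N}} X_n^d$; passing to strong duals then completes the proof.

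To this end, let $R\colon X\to X'$ be the Riesz isomorphism, a (conjugate-)linear isometric bijection. Unwinding the definition of the dual operator shows that $R$ maps $D(A^{*})=D(A)$ onto $D(A')$ and satisfies $A'R=RA^{*}=RA$ on $D(A)$; by induction on $n$ it follows that $R$ maps $D(A^n)$ onto $D(A'^{\,n})$ and satisfies $A'^{\,n}R=RA^n$ there. Since $R$ is isometric, we get for $x\in D(A^n)$ that $\|Rx\|_{D(A'^{\,n})}=\|Rx\|_{X'}+\|A'^{\,n}Rx\|_{X'}=\|x\|_X+\|A^nx\|_X=\|x\|_n$, so $R$ restricts to an isometric isomorphism $R_n\colon X_n\to X_n^d$ for every $n\geqslant1$.

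The maps $R_n$ are compatible with the linking maps of the two projective sequences: the inclusion $X_{n+1}\hookrightarrow X_n$ and the inclusion $j_{n+1}^{n}\colon X_{n+1}^d\to X_n^d$ are restrictions of identity maps, and $R_n$ and $R_{n+1}$ are restrictions of the single map $R$, whence $R_n$ followed by $X_{n+1}\hookrightarrow X_n$ equals $j_{n+1}^{n}$ followed by $R_{n+1}$. Therefore $(R_n)_{n\in\mathbb{N}}$ is the restriction of $\prod_{n\in\mathbb{N}}R_n$ to the projective limits and induces a (conjugate-)linear topological isomorphism $\proj{n\in\mathbb{N}}X_n\to\proj{n\in\mathbb{N}}X_n^d$. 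Taking strong duals and combining with Theorem~\ref{THM} gives $(\proj{n\in\mathbb{N}}X_n)'_b\cong(\proj{n\in\mathbb{N}}X_n^d)'_b\cong\ind{n\in\mathbb{N}}X_{-n}$, and the inductive limit is complete.

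The only point that needs genuine care is the identity $A'^{\,n}R=RA^n$, i.e.\ the inductive check that $R(D(A^n))=D(A'^{\,n})$ and that $R$ intertwines the $n$-th powers; the base case $n=1$ is the standard correspondence between the Banach-space dual $A'$ and the Hilbert-space adjoint $A^{*}$ under $R$, and the inductive step uses $0\in\rho(A)$. Everything else is the routine transport of a commutative diagram of Banach spaces along a family of isometries, together with the elementary observation that---unlike for inductive limits---an isomorphism of two projective sequences over the same index set immediately induces an isomorphism of their projective limits.
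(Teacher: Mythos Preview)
Your overall strategy coincides with the paper's: reduce to Theorem~\ref{THM} by exhibiting an isomorphism $\proj_{n\in\mathbb{N}}X_n\cong\proj_{n\in\mathbb{N}}X_n^d$ induced by a single map $X\to X'$ that intertwines $A$ with $A'$ and hence restricts to isometries $X_n\to X_n^d$ compatible with the inclusions.

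There is, however, a genuine gap. The Riesz map $R\colon X\to X'$ is \emph{conjugate}-linear over $\mathbb{C}$, and you flag this yourself by writing ``(conjugate-)linear''. Consequently the induced bijection $\proj_{n}X_n\to\proj_{n}X_n^d$ is only an antilinear homeomorphism, and passing to strong duals produces only an antilinear homeomorphism $(\proj_{n}X_n)'_b\to(\proj_{n}X_n^d)'_b$. Composed with the (linear) isomorphism of Theorem~\ref{THM}, this yields an antilinear identification with $\ind_{n}X_{-n}$, not the $\mathbb{C}$-linear isomorphism ``$\cong$'' asserted in the statement. Your argument does already deliver the completeness of $\ind_{n}X_{-n}$, since that is purely topological, but the displayed isomorphism is not yet established.

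The paper's proof is designed precisely to close this gap. Using the spectral theorem, $A$ is unitarily equivalent to multiplication by a \emph{real} function on some $L^2(\Omega,\mu)$; pulling back pointwise complex conjugation gives an antilinear involution $J\colon X\to X$ which leaves every $D(A^n)$ invariant and commutes with $A$. The composition $J_X\circ J$ (Riesz map followed by $J$) is then a \emph{linear} isometric isomorphism $X\to X'$ which still restricts to isometries $X_n\to X_n^d$ compatible with the linking maps, and the rest of the argument proceeds exactly as you wrote. In short, self-adjointness is used not merely to obtain $A^{*}=A$, but---via the spectral theorem---to produce a conjugation making the Riesz identification $\mathbb{C}$-linear; this is the missing ingredient in your proposal.
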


\begin{proof}
  In order to prove this theorem, we want to show that for all $n\in\mathbb{N}$ the spaces $X_n$ and $X_n^d$ are isomorphic in a way which is compatible with the construction in Section~\ref{SEC-2} and the proof of Theorem~\ref{THM}. In other words, we have to find an isomorphism between $X$ and $X'$ which is ``canonical enough for our purposes''.

  \medskip

  Since $A$ is self-adjoint, by the spectral theorem (see eg.~\cite[Theorem~D.5.1]{Haase-Buch}) there is an measure space $(\Omega,\mu)$, a real valued continuous function $f$ on $\Omega$ and a unitary operator $U\colon X\to L^{2}(\Omega,\mu)$ mapping $D(A)$ onto $D(M_f)$ and satisfying 
  \begin{equation}\label{eq:SpThm}
    A=U^{\star}\circ M_f \circ U, 
  \end{equation}
  where $M_fg:= fg$ for $g\in D(M_f)$.

  \medskip

  The conjugation mapping $L^{2}(\Omega,\mu)\to L^{2}(\Omega,\mu), f\mapsto \bar{f}$, where $\bar{f}(t):=\overline{f(t)}$, is an antilinear isometry and hence the mapping
  \begin{equation}\label{eq:Conj}
    J\colon X \to X,\; x\mapsto U^{\star}(\overline{U(x)}) 
  \end{equation}
  is an antilinear isometry onto. As the domain $D(M_f)$ is invariant under complex conjugation, i.e., $\overline{D(M_f)}=D(M_f)$, and since $U^{\star}(D(M_f))=D(A)$, we may deduce $J(D(A)) = D(A)$ and inductively $J(X_n)=X_n$ for all $n\in\mathbb{N}$. 
  The identites
  \begin{equation}
    \begin{aligned}
      (J\circ A)(x) & = U^{\star} (\overline{UAx}) = U^{\star}(\overline{(U\circ U^{\star}\circ M_f\circ  U)(x)}) 
      = U^{\star}(\overline{fU(x)}) = U^{\star}(f\overline{U(x)})\\
      &= (U^{\star}\circ M_{f}\circ U)(U^{\star}(\overline{U(x)}))= (A\circ J)(x),
    \end{aligned}
  \end{equation}
  which follow from~\eqref{eq:SpThm}, \eqref{eq:Conj},~and the fact that $f$ is real valued, show that $A$ and $J$ commute.

  \medskip

  We denote by $J_X\colon X \to X', x\mapsto (\cdot, x)$ the canonical map, which, by the Fr\'{e}chet-Riesz Theorem, is an antilinear, isometric map onto. Hence the composition
  \begin{equation}
    J_X\circ J \colon X \to X', \quad x \mapsto (\cdot, J(x))
  \end{equation}
  is a linear isometry onto. 

  \medskip

  Note that a direct computation shows that $J_X$ maps $D(A)=D(A^\star)$ onto $D(A')$ and hence inductively $X_n$ onto $X_n^d$.
  In addition, it is easy to see that $A=J_X^{-1}\circ A'\circ J_X$. From this we may conclude that $J_X\circ J$ maps $X_n$ onto $X_n^d$.
  Moreover the computation
  \begin{equation}
    \begin{aligned}
      \|(J\circ J_{X}^{-1})(\varphi)\|_{n} & = \|J(J_{X}^{-1})(\varphi)\|_{X} + \|A^{n}(J\circ J_{X}^{-1})(\varphi)\|_{X} 
      = \|J_{X}^{-1}(\varphi)\|_{X} + \|A^{n}(J_{X}^{-1}(\varphi))\|_{X} \\
      & = \|\varphi\|_{X'} + \|(J_{X}\circ A\circ J_{X}^{-1})\circ \cdots\circ (J_{X}\circ A\circ J_{X}^{-1})\varphi\|_{X'} \\
      & = \|\varphi\|_{X'} + \|(A')^{n}\varphi\|_{X'} = \|\varphi\|_{X_n^{d}}
    \end{aligned}
  \end{equation}
  for $\varphi\in X_{n}^{d}$, shows that $J_{X}\circ J|_{X_n}$ is even an isometry onto. Since we are only using restrictions of 
  the mapping $J_{X}\circ J$, the diagram 
  \begin{equation}
    \begin{tikzcd}
      \cdots \arrow{r} & X_{n+1} \arrow[swap]{d}{J_X\circ J|_{X_{n+1}}}\arrow{r} & X_{n} \arrow[swap]{d}{J_X\circ J|_{X_n}} \arrow{r} 
      & \cdots \arrow{r} & X \arrow[swap]{d}{J_X\circ J}\\
      \cdots \arrow{r} & X_{n+1}^{d}\arrow{r} & X_{n}^{d} \arrow{r} & \cdots \arrow{r} & X',
    \end{tikzcd}
  \end{equation}
  where the horizontal arrow are inclusions, is commutative. From this we may deduce that 
  \[
    \proj{n\in\mathbb{N}}{X_n} \cong \proj{n\in\mathbb{N}}{X_n^d},
  \]
  which in combination with Theorem~\ref{THM} finishes the proof.
\end{proof}

\medskip

We conclude this section with two corollaries. The first is the existence result for the universal ex\-tra\-po\-lation space, cf.~the last paragraph of Section \ref{SEC-1}.

\begin{cor} Let $X$ be a reflexive Banach space and $A\colon D(A)\rightarrow X$ be the generator of a \Cnull-semigroup such that $A^{-1}$ exists and belongs to $L(X)$. Then the inductive limit $\ind{n\in\mathbb{N}} X_{-n}$ is complete and $X_{-\infty}=\ind{n\in\mathbb{N}} X_{-n}$ holds.\hfill\qed
\end{cor}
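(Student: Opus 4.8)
The plan is to read this off directly from Theorem~\ref{THM} together with one elementary observation about strong duals. First I would recall that the generator of a \Cnull-semigroup is automatically densely defined and closed (see \cite[Chapter II.1]{EN}); hence, together with the standing hypothesis $A^{-1}\in L(X)$ and the reflexivity of $X$, the operator $A$ satisfies exactly the assumptions of Theorem~\ref{THM}. Applying that theorem gives the isomorphism $\ind{n\in\mathbb{N}}X_{-n}\cong(\proj{n\in\mathbb{N}}X_n^d)'_b$ and, in particular, the completeness of $\ind{n\in\mathbb{N}}X_{-n}$, which is the first assertion.

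For the second assertion I would recall that, by \cite{W}, the universal extrapolation space is by definition the completion $X_{-\infty}=(\ind{n\in\mathbb{N}}X_{-n})^{\hatt}$ of the inductive limit, and that this definition presupposes that the inductive limit is Hausdorff. So it remains to check two points: that $\ind{n\in\mathbb{N}}X_{-n}$ is Hausdorff, so that the completion in the sense of \cite{W} is meaningful, and that forming this completion changes nothing. Both follow from the representation just obtained. Since $\proj{n\in\mathbb{N}}X_n^d$ is a Fr\'{e}chet space, being a countable projective limit of Banach spaces, its strong dual is complete and Hausdorff (cf.\ \cite[p.~296]{MV}); alternatively, one may simply note that the strong dual of any locally convex space is Hausdorff, since the bidual pairing already separates its points. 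Either way $(\proj{n\in\mathbb{N}}X_n^d)'_b$, and therefore $\ind{n\in\mathbb{N}}X_{-n}$, is Hausdorff, and being already complete by the first part it equals its own completion, i.e.\ $X_{-\infty}=\ind{n\in\mathbb{N}}X_{-n}$.

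I do not expect a genuine obstacle here: the corollary is essentially a translation of Theorem~\ref{THM} into the terminology of \cite{W}. The only point that needs a word of care is the logical order in the definition of $X_{-\infty}$ --- one must establish the Hausdorff property of $\ind{n\in\mathbb{N}}X_{-n}$ \emph{before} one is entitled to speak of its completion --- and this is precisely what the strong-dual representation supplies.
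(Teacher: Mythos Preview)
Your proposal is correct and matches the paper's approach exactly: the paper provides no explicit proof for this corollary (it carries only a \qed), treating it as an immediate consequence of Theorem~\ref{THM} together with the observations in Section~\ref{SEC-1} that generators are closed and densely defined and that the strong dual of a Fr\'echet space is complete. Your write-up simply spells out these implicit steps, including the care about the Hausdorff property needed for $X_{-\infty}$ to be defined.
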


The second corollary fills the gap that we left in our initial example of the classical Sobolev scale considered in Section \ref{SEC-1}.

\begin{cor}\label{COR-2} 
  Let $X$ be a Hilbert space and $A\colon D(A)\rightarrow X$ be the generator of a \Cnull-semigroup such that $A^{-1}$ 
  exists and belongs to $L(X)$. If there is a $k\in\mathbb{N}$ such that $A^k\colon D(A^k)\to X$ is self-adjoint, then 
  \[
    (\proj{n\in\mathbb{N}}X_n)'_b\cong \ind{n\in\mathbb{N}} X_{-n}
  \]
  holds.
\end{cor}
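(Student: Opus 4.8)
The plan is to reduce to Theorem~\ref{THM-Hilbert} by applying it to the self-adjoint operator $B:=A^{k}$ in place of $A$. First I would check that $B$ meets the hypotheses of Theorem~\ref{THM-Hilbert}: by assumption $B$ is self-adjoint, hence automatically densely defined and closed, and by Lemma~\ref{LEM-0}(ii) it is invertible with $B^{-1}=(A^{-1})^{k}\in L(X)$. Since $(B^{n})^{-1}=(A^{-1})^{kn}=(A^{kn})^{-1}$, we have $B^{n}=A^{kn}$ as operators for every $n\geqslant1$. Consequently the $n$-th interpolation space $Y_{n}$ of $B$, namely $D(B^{n})=D(A^{kn})$ with the norm $\|x\|_{X}+\|B^{n}x\|_{X}=\|x\|_{X}+\|A^{kn}x\|_{X}=\|x\|_{kn}$, coincides isometrically with $X_{kn}$; and the $n$-th extrapolation space $Y_{-n}$ of $B$, being the completion of $X$ for $x\mapsto\|B^{-n}x\|_{X}=\|A^{-kn}x\|_{X}=\|x\|_{-kn}$, coincides isometrically with $X_{-kn}$. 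Applying Theorem~\ref{THM-Hilbert} to $B$ then gives
\[
  (\proj{n\in\mathbb{N}}Y_{n})'_{b}\cong\ind{n\in\mathbb{N}}Y_{-n},
\]
and it remains to identify the left-hand side with $(\proj{n\in\mathbb{N}}X_{n})'_{b}$ and the right-hand side with $\ind{n\in\mathbb{N}}X_{-n}$.

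Under the identification $Y_{n}=X_{kn}$ the linking maps of the projective spectrum $(Y_{n})_{n\in\mathbb{N}}$ are the canonical dense inclusions $X_{k(n+1)}\hookrightarrow X_{kn}$, so that $\proj{n\in\mathbb{N}}Y_{n}=\proj{n\in\mathbb{N}}X_{kn}$; since $(kn)_{n\in\mathbb{N}}$ is cofinal in $\mathbb{N}$, this projective limit is canonically isomorphic as a Fr\'{e}chet space to $\proj{n\in\mathbb{N}}X_{n}$, whence their strong duals agree. On the extrapolation side I would use that, by the construction recalled at the start of Section~\ref{SEC-3}, each linking map $i_{m}^{m+1}\colon X_{-m}\to X_{-(m+1)}$ restricts to $\id_{X}$ on the dense subspace $X$, and likewise for the linking maps of the spectrum $(Y_{-n})_{n\in\mathbb{N}}$ obtained by applying that construction to $B$. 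Since continuous linear maps that agree on a dense subspace coincide, the isometric identification $Y_{-n}=X_{-kn}$ intertwines the linking maps of $(Y_{-n})_{n\in\mathbb{N}}$ with the composites $i_{k(n+1)-1}^{k(n+1)}\circ\cdots\circ i_{kn}^{kn+1}\colon X_{-kn}\to X_{-k(n+1)}$; hence $(Y_{-n})_{n\in\mathbb{N}}$ is isometrically isomorphic, as an inductive sequence, to the subsequence $(X_{-kn})_{n\in\mathbb{N}}$ of $(X_{-n})_{n\in\mathbb{N}}$. As $(kn)_{n\in\mathbb{N}}$ is cofinal, the latter is an equivalent inductive sequence to $(X_{-n})_{n\in\mathbb{N}}$ in the sense of Section~\ref{SEC-1}, so $\ind{n\in\mathbb{N}}Y_{-n}\cong\ind{n\in\mathbb{N}}X_{-kn}\cong\ind{n\in\mathbb{N}}X_{-n}$. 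Chaining these isomorphisms with the one from Theorem~\ref{THM-Hilbert} yields the assertion.

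The main obstacle is the bookkeeping in the extrapolation step: one must verify that passing from $A$ to $B=A^{k}$ respects the linking maps $i_{m}^{m+1}$, which are defined only indirectly through the auxiliary isometries $\Theta_{m}$, and that passing to a cofinal subsequence produces an equivalent inductive sequence. Both are routine, because every map in question is the unique continuous extension of $\id_{X}$ and $X$ is dense in every space involved; no new difficulty arises beyond what is already handled in Section~\ref{SEC-3} and in the discussion of equivalent inductive sequences in Section~\ref{SEC-1}.
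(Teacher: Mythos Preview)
Your proof is correct and follows essentially the same strategy as the paper: apply Theorem~\ref{THM-Hilbert} to $B=A^{k}$ (using Lemma~\ref{LEM-0}(ii) to secure the bounded inverse), then use that passing to the cofinal subsequence $(kn)_{n\in\mathbb{N}}$ leaves both the projective limit of the interpolation spaces and the inductive limit of the extrapolation spaces unchanged. The paper's own proof is extremely terse (two sentences) and in fact only spells out the projective side explicitly; your version supplies the missing bookkeeping for the extrapolation side and the verification that $B^{n}=A^{kn}$, which the paper leaves implicit.
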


\begin{proof}
  \red{Since $A^k$ is self-adjoint, we can apply Theorem~\ref{THM-Hilbert} to $A^k$ by Lemma~\ref{LEM-0}(ii). The 
  Corollary follows since the inductive sequences $(X_n,i_{n}^{n+1})$ and $(X_{kn}, i_{kn+k-1}^{k(n+1)}\circ\cdots\circ i_{nk}^{nk+1})$ 
  are equivalent and hence the projective limits $\proj{n\in\mathbb{N}}{X_n}$ and $\proj{n\in\mathbb{N}}{X_{kn}}$ coincide.}
\end{proof}

\smallskip


\section{Examples and open problems}

\smallskip

With the above theory we are able to extend our initial Hilbert space example form Section \ref{SEC-1} to arbitrary Lebesgue exponent $p\in(1,\infty)$.

\smallskip

We consider $X=L^{p}(\mathbb{R})$ for $1<p<\infty$ and $A=\frac{\rm d}{{\rm d}x}$ with $D(A)=\bigl\{f\in L^p(\mathbb{R})\:;\:\frac{\rm d}{{\rm d}x}f\in L^p(\mathbb{R})\bigr\}$. We get $X_n=W^{p,n}(\mathbb{R})$ and $X_{\infty}=\mathcal{D}_{L^p}(\mathbb{R})$. As duals with respect to the pivot space $X=L^{p}(\mathbb{R})$, we get $X_n^d=W^{q,n}(\mathbb{R})$, where $1/p+1/q=1$ and $(X_n^d)^\star=W^{p,-n}(\mathbb{R})$. Indeed, the completion of $L^{p}(\mathbb{R})$ with respect to the norm $\|\cdot\|_{\star,n}$ is a classical characterisation of the space $W^{p,-n}(\mathbb{R})$, cf.~Adams, Fournier \cite[3.13 on p.~64]{Adams}. From Lemma \ref{LEM-1} we now get that $X_{-n}=W^{p,-n}(\mathbb{R})$. Finally, Theorem~\ref{THM} implies that
\[
  X_{-\infty}=\ind{n\in\mathbb{N}}W^{p,-n}(\mathbb{R}) = \mathcal{D}'_{L^p}(\mathbb{R})
\]
since $\proj{n\in\mathbb{N}} (X_n^d)' = \proj{n\in\mathbb{N}} W^{q,n}(\mathbb{R}) = \mathcal{D}_{L^q}(\mathbb{R})$ and $\mathcal{D}'_{L^p}(\mathbb{R})=(\mathcal{D}_{L^q}(\mathbb{R}))'$. The case $p=2$ is the classical Sobolev scale considered in the introduction.

\medskip

Unfortunately, the question of how to deal with the case of non-reflexive Banach spaces remains open and therefore other typical spaces on which for instance the shift semigroup can be studied cannot be treated yet. However, also here the universal interpolation space can appear to be a well-studied space. For example in the case $X=\mathcal{C}_{0}(\mathbb{R})$ and $A=\frac{\rm d}{{\rm d}x}$ with $D(A)=\mathcal{C}_0^1(\mathbb{R})$, we get $X_n = \mathcal{C}^{n}_0(\mathbb{R})$ and $X_{\infty}=\mathcal{C}^{\infty}_{0}(\mathbb{R})$ which was again studied by Schwartz \cite{Schwartz} under the name $\dot{\mathcal{B}}(\mathbb{R})$ as a predual of $\mathcal{D}'_{L^{1}}(\mathbb{R})$. The well known space of distributions vanishing at infinity is the space
\[
\dot{\mathcal{B}}'(\mathbb{R}) =\overline{\mathcal{E}'(\mathbb{R})}^{\mathcal{D}'_{L^\infty}},
\]
whose dual is $\mathcal{D}_{L^{1}}(\mathbb{R})$, see e.g.~\cite[Proposition~7]{Bar2015Completing}. Note that $\dot{\mathcal{B}}'(\mathbb{R})$ is not the dual space of $\dot{\mathcal{B}}(\mathbb{R})$. It seems to be natural to conjecture that $\dot{\mathcal{B}}'(\mathbb{R})$ coincides with the universal extrapolation space $X_{-\infty}$.

\bigskip

\footnotesize

{\sc Acknowledgements. }The authors wish to thank Markus Haase for constructive comments and discussions that improved some of the results and the readability of the whole paper. In addition, the authors would like to thank an anonymous referee for his/her valuable comments.

\normalsize

\bigskip

\normalsize

\bibliographystyle{amsplain}

\bibliography{BW}

\end{document}